\title{A Euclidean Ramsey result in the plane}
\author{Sergei Tsaturian\thanks{University of Manitoba, \url{tsaturis@myumanitoba.ca}}}
\date{4 April 2017}
\begin{document}
\maketitle

\begin{abstract}
An old question in Euclidean Ramsey theory asks, if the points in the plane are red-blue coloured, does there always exist a red pair of points at unit distance or five blue points in line separated by unit distances?
An elementary proof answers this question in the affirmative.
\end{abstract}

\newtheorem{theorem}{Theorem}[section]
\newtheorem{lemma}[theorem]{Lemma}
\newtheorem{corollary}[theorem]{Corollary}
\newtheorem{conjecture}{Conjecture}
\newtheorem{definition}[theorem]{Definition}
\newtheorem{problem} [theorem]{Problem}
\newtheorem{proposition} [theorem]{Proposition}
\newtheorem{example}[theorem]{Example}
\newtheorem{question}{Question}


\section{Introduction}
Many problems in Euclidean Ramsey theory ask, for some $d\in\mathbb{Z}^+$, if $E^d$ is coloured with $r\geq 2$ colours, does there exist a colour class containing some desired geometric structure? Research in Euclidean Ramsey theory was surveyed in \cites{erdos1, erdos2, erdos3} by Erd\H os, Graham, Montgomery, Rothschild, Spencer, and Straus; for a more recent survey, see Graham \cite{graham}.\\
Say that two geometric configurations are congruent iff there exists an isometry (distance preserving bijection) between them. For $d\in\mathbb{Z}^+$, and geometric configurations $F_1$, $F_2$, let the notation $\mathbb{E}^d\rightarrow (F_1,F_2)$ mean that for any red-blue coloring of $\mathbb{E}^d$, either the red points contain a congruent copy of $F_1$, or the blue points contain a congruent copy of $F_2$.
For a positive integer $i$, denote by $\ell_i$ the configuration of $i$ collinear points with distance $1$ between consecutive points. One of the results in \cite{erdos2} states that \begin{equation}\label{e:l2l4}\mathbb{E}^2\rightarrow (\ell_2,\ell_4).\end{equation}
In the same paper, it was asked if $\mathbb{E}^2\rightarrow (\ell_2,\ell_5)$, or perhaps a weaker result holds: $\mathbb{E}^3\rightarrow (\ell_2,\ell_5)$.\\
The result (\ref{e:l2l4}) was generalised by Juh\'asz\cite{juhasz}, who proved that if $T_4$ is any configuration of $4$ points, then $\mathbb{E}^2\rightarrow (\ell_2,T_4)$. Juh\'asz (personal
communication, 10 February 2017) informed the author that Iv\'an's thesis~\cite{ivan} contains a proof that for any configuration $T_5$ of $5$ points, $\mathbb{E}^3\rightarrow (\ell_2,T_5)$ (which implies that $\mathbb{E}^3\rightarrow (\ell_2,\ell_5)$). Arman and Tsaturian\cite{at} proved that $\mathbb{E}^3\rightarrow (\ell_2,\ell_6)$.\\
In this paper, it is proved that $\mathbb{E}^2\rightarrow (\ell_2,\ell_5)$:
\begin{theorem}\label{main}
Let the Euclidean space $\mathbb{E}^2$ be coloured in red and blue so that there are no two red points distance $1$ apart. Then there exist five blue points that form an $\ell_5$.
\end{theorem}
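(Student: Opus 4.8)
The plan is to argue by contradiction: assume a red-blue colouring of $\mathbb{E}^2$ with no red pair at unit distance and no blue $\ell_5$, and derive a contradiction from the rich geometric constraints this imposes. The starting point is the known result $\mathbb{E}^2\rightarrow(\ell_2,\ell_4)$, which tells us that \emph{some} $\ell_4$ is monochromatic blue; in fact a compactness/averaging argument should show that blue $\ell_4$'s are plentiful — in particular, through suitable points there pass blue $\ell_4$'s in many directions. The absence of a blue $\ell_5$ then forces strong local structure: any blue $\ell_4$ cannot be extended at either end, so both points at distance $1$ beyond the endpoints (along the line) must be red. Since no two red points are at unit distance, this in turn forbids certain further red points, which recolours nearby points blue, and one tries to build a blue $\ell_5$ or a red unit-distance pair from the resulting configuration.

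Concretely, I would first record the ``no red unit pair'' consequence that the red set has the property that any unit circle centred at a red point is entirely blue, and more generally red points are ``isolated'' at scale $1$ in every direction; dually, the blue set is very dense and contains long near-arithmetic progressions. The second step is to set up the key local gadget: take a blue $\ell_4$ with points $p_1,p_2,p_3,p_4$; then $p_0$ (the unit-distance extension past $p_1$) and $p_5$ (past $p_4$) are both red, so $|p_0p_5|\neq 1$, i.e.\ $4\neq 1$ — trivially true, so one instead needs to rotate: consider all unit vectors $v$ and the six points $p_i = p_1+(i-1)v$; the endpoints of any blue $\ell_4$ among them are constrained, and by varying $v$ and overlapping several such progressions one forces two red points to coincide at distance $1$. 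The real engine will be combining several overlapping (possibly non-collinear) copies of $\ell_4$ sharing points, using the $\mathbb{E}^2\rightarrow(\ell_2,\ell_4)$ result repeatedly on carefully chosen finite point sets (e.g.\ sublattices of the triangular lattice with spacing $1$, or grids built from unit segments) so that the Ramsey property yields a blue $\ell_4$ whose forced red extensions clash with previously forced red points.

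The main obstacle I anticipate is the ``geometry of extension'': knowing a blue $\ell_4$ exists somewhere is weak, and pinning down \emph{where} blue $\ell_4$'s lie, and getting two of them to interact (share endpoints or have forced-red extension points at unit distance), requires a clever finite configuration on which to invoke $\mathbb{E}^2\rightarrow(\ell_2,\ell_4)$ — essentially an explicit finite point set $S\subset\mathbb{E}^2$ such that: (i) $S$ contains many copies of $\ell_4$, (ii) in any red-blue colouring of $S$ with no red $\ell_2$, some blue $\ell_4$ is forced, and (iii) once that blue $\ell_4$ is located, its two forced-red extension points, together with the previously forced red points, necessarily contain a red unit pair, or else a blue $\ell_5$ already appears inside $S$. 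Designing $S$ — likely a union of several unit-spaced lines in a few well-chosen directions, or a patch of the triangular lattice — and checking the finite case analysis is where the elementary-but-intricate work lies. A secondary technical point is handling the compactness step cleanly (passing from ``some blue $\ell_4$'' for every finite Ramsey-type configuration to a usable structural statement) and making sure the case distinctions on which points of $S$ are red are exhaustive. I would expect the final argument to reduce, after all reductions, to a bounded case check on a fixed finite diagram, which is what makes the proof ``elementary.''
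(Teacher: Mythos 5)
Your proposal is a strategy outline rather than a proof: the entire mathematical content is deferred to ``designing $S$ \dots and checking the finite case analysis,'' which is exactly the part that constitutes the theorem. Nothing in the proposal identifies the configuration, the forced colour deductions, or the final clash; the one concrete deduction you do make (endpoints extending a blue $\ell_4$ are red, hence $|p_0p_5|\neq 1$) is, as you yourself note, vacuous. Moreover, the reliance on $\mathbb{E}^2\rightarrow(\ell_2,\ell_4)$ plus compactness only tells you that every congruent copy of some (unspecified, possibly huge) witness set contains a blue $\ell_4$ somewhere, with no control over where; turning that into two interacting $\ell_4$'s whose forced red extensions are at unit distance is precisely the missing idea, not a technical afterthought.

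There is also a concrete obstruction to your main candidate for $S$, a ``patch of the triangular lattice'': the paper exhibits (Lemmas 2.5 and 2.6, Figures 5 and 7) colourings of an entire unit triangular lattice with no red $\ell_2$ and no blue $\ell_5$ inside the lattice, so no finite subconfiguration of a single unit triangular lattice can force a contradiction by itself. The paper's route is quite different from what you sketch: it never invokes $(\ell_2,\ell_4)$, but instead proves a chain of forbidden-configuration lemmas (no blue side-$3$ equilateral triangle with red centre, no red side-$\sqrt3$ triangle with red centre, no red $\mathfrak{T}_7$, every red $\mathfrak{T}_3$ extends to a red $\mathfrak{T}_6$), uses these to \emph{classify} all admissible colourings of a unit triangular lattice, observes that in every admissible lattice colouring any two lattice points at distance $5$ have the same colour, and then gets the contradiction from a point \emph{outside} the lattice: two points $B,C$ at distance $5$ from a red point $A$ with $|BC|=1$ cannot both be red, so some lattice through $A$ and a blue point at distance $5$ violates the classification. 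This global rigidity-plus-external-constraint step has no counterpart in your plan, and without something playing its role the ``bounded case check'' you hope for has not been shown to exist.
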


\section{Proof of Theorem \ref{main}}

The proof is by contradiction; it is assumed that there are no five blue points forming an $\ell_5$. The following lemmas are needed.

\begin{lemma}\label{l:bluetr}
Let $\mathbb{E}^2$ be coloured in red and blue so that there is no red $\ell_2$. If there is no blue $\ell_5$, then there are no three blue points forming an equilateral triangle with side length $3$ and with a red centre.
\end{lemma}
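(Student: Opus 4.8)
The plan is to argue by contradiction. Suppose $A$, $B$, $C$ are blue and form an equilateral triangle of side $3$ whose centre $O$ is red. Since $O$ is red and there is no red $\ell_2$, no point at distance $1$ from $O$ can be red; hence the unit circle $S$ centred at $O$ is entirely blue.

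Next I would examine how $S$ meets the sides of the triangle. The circumradius of a side-$3$ equilateral triangle is $\sqrt{3}$, so the distance from $O$ to each side (the inradius) is $\sqrt{3}/2 < 1$; thus $S$ crosses each side in two points. A one-line computation shows that the chord of a radius-$1$ circle at distance $\sqrt{3}/2$ from the centre has length $1$; and since the foot of the perpendicular from $O$ to side $AB$ is the midpoint of $AB$, which is at distance $3/2$ from $A$ and from $B$, the two points $P,Q$ in which $S$ meets $AB$ satisfy $AP = PQ = QB = 1$ (and lie strictly between $A$ and $B$, since $1/2 < 3/2$). So $A,P,Q,B$ is a blue $\ell_4$ lying on the line $AB$, and the analogous statement holds for $BC$ and $CA$.

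Now I would push one step further at the ends. Let $A'$ be the point of line $AB$ with $A'A = 1$ on the far side of $A$ from $P$. If $A'$ were blue, then $A',A,P,Q,B$ would be a blue $\ell_5$, contradicting the hypothesis; hence $A'$ is red. Applying this at both ends of each of the three sides, I get that at every vertex $V$ of the triangle the two points lying at distance $1$ from $V$ on the outward extensions of the two sides through $V$ are red. At the vertex $A$, call these red points $R_1$ and $R_2$. Each is at distance $1$ from $A$, and the angle $R_1AR_2$ equals the interior angle of the triangle, namely $60^{\circ}$ (each outward extension ray is the reflection through $A$ of a side ray, and reflection preserves the angle). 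By the law of cosines $R_1R_2 = 1$, so $\{R_1,R_2\}$ is a red $\ell_2$ --- a contradiction.

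Every computation above is elementary, so I do not foresee a real obstacle; the only points needing care are that $P$ and $Q$ fall strictly inside the segment $AB$ with the spacings correctly oriented, and that the five points $A',A,P,Q,B$ are distinct, both of which follow from $1/2<3/2$. The genuine idea of the argument --- rather than a difficulty --- is the opening move: turning the red centre into an all-blue unit circle, which then hands us a blue $\ell_4$ on each side for free.
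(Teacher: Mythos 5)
Your proof is correct and is essentially the paper's argument: your points $P,Q$ (and their analogues) are exactly the lattice points the paper calls $D,E,F,G$ (blue because they lie at distance $1$ from the red centre $O$), and your red points $R_1,R_2$ at a vertex are the paper's $X$ and $Y$, which form the contradicting red $\ell_2$ since they subtend a $60^{\circ}$ angle at distance $1$ from the vertex. The only cosmetic difference is that you derive the blue $\ell_4$ on each side via the unit circle about $O$ rather than reading it off the unit triangular lattice.
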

\begin{proof}
Suppose that $\mathbb{E}^2$ is coloured in red and blue so that there is no red $\ell_2$ and no blue $\ell_5$.
Suppose that blue points $A$, $B$ and $C$ form an equilateral triangle with side length $3$ and with red centre $O$. Consider the part of the unit triangular lattice shown in Figure 1(a). The points $D$, $E$, $F$, $G$ are blue, since they are distance $1$ apart from $O$. The point $X$ is red; otherwise $XADEB$ is a red $\ell_5$. Similarly, $Y$ is red (to prevent red $YAFGC$). Then $X$ and $Y$ are two red points distance $1$ apart, which contradicts the assumption.
\end{proof}

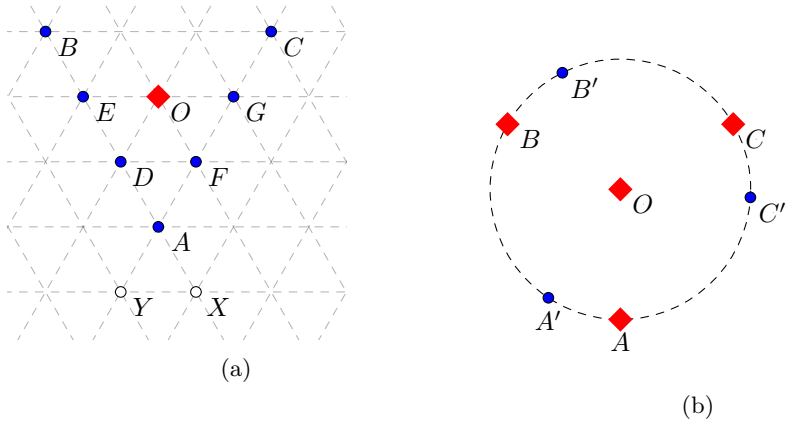
\begin{figure}[H]
    \begin{subfigure}{0.5\linewidth}

\begin{tikzpicture}[line cap=round,line join=round,>=triangle 45,x=1.0cm,y=1.0cm,scale=1]
\clip(-2,-1.5) rectangle (2.5,3);

\foreach \a in {1,2,...,10}
\draw [dashed, opacity=0.3] (-10,0)++(240:10)++(60:\a)++(120:\a)--+(32,0);

\foreach \a in {1,2,...,10}
\draw [dashed, opacity=0.3] (-10,0)++(240:10)++(60:\a+1)++(120:\a)--+(32,0);

\foreach \a in {1,2,...,30}
\draw [dashed, opacity=0.3] (-10,0)++(240:10)++(\a,0)--+(60:30);

\foreach \a in {1,2,...,30}
\draw [dashed, opacity=0.3] (-10,0)++(240:10)++(\a,0)--+(120:30);

\draw [fill=blue] (0,0) circle(2pt);
\draw [fill=blue] (0,0)++(0.3,-0.2) circle(0pt) node{$A$};

\draw [fill=blue] (0,0)++(60:1) circle(2pt);
\draw [fill=blue] (0,0)++(60:1)++(0.3,-0.2) circle(0pt) node{$F$};

\draw [fill=blue] (0,0)++(60:2) circle(2pt);
\draw [fill=blue] (0,0)++(60:2)++(0.3,-0.2) circle(0pt) node{$G$};

\draw [fill=blue] (0,0)++(60:3) circle(2pt);
\draw [fill=blue] (0,0)++(60:3)++(0.3,-0.2) circle(0pt) node{$C$};

\draw [fill=blue] (0,0)++(120:1) circle(2pt);
\draw [fill=blue] (0,0)++(120:1)++(0.3,-0.2) circle(0pt) node{$D$};

\draw [fill=blue] (0,0)++(120:2) circle(2pt);
\draw [fill=blue] (0,0)++(120:2)++(0.3,-0.2) circle(0pt) node{$E$};

\draw [fill=blue] (0,0)++(120:3) circle(2pt);
\draw [fill=blue] (0,0)++(120:3)++(0.3,-0.2) circle(0pt) node{$B$};

\draw [fill=red] (0,0)++(60:1)++(120:1) node[diamond, scale=0.7, fill=red] {};
\draw [fill=blue] (0,0)++(60:1)++(120:1)++(0.3,-0.2) circle(0pt) node{$O$};

\draw [fill=white] (0,0)++(240:1) circle(2pt);
\draw [fill=blue] (0,0)++(240:1)++(0.3,-0.2) circle(0pt) node{$Y$};

\draw [fill=white] (0,0)++(300:1) circle(2pt);
\draw [fill=blue] (0,0)++(300:1)++(0.3,-0.2) circle(0pt) node{$X$};

\end{tikzpicture}

\caption{}
\label{f:bluetr}
\end{subfigure}
\begin{subfigure}{0.5\linewidth}
        %

\begin{tikzpicture}[line cap=round,line join=round,>=triangle 45,x=1.0cm,y=1.0cm,scale=1]
\clip(-2,-2.5) rectangle (2.5,3);

\draw [fill=red] (0,0) node[diamond, scale=0.7, fill=red] {};
\draw [fill=blue] (0,0)++(0.3,-0.2) circle(0pt) node{$O$};

\draw [dashed, color=black] (0,0) circle (1.732cm);

\draw [fill=red] (0,0)++(30:1.732) node[diamond, scale=0.7, fill=red] {};
\draw [fill=blue] (0,0)++(30:1.732)++(0.3,-0.2) circle(0pt) node{$C$};

\draw [fill=blue] (0,0)++(-3.557:1.732) circle(2pt);
\draw [fill=blue] (0,0)++(-3.557:1.732)++(0.3,-0.2) circle(0pt) node{$C'$};

\draw [fill=red] (0,0)++(150:1.732) node[diamond, scale=0.7, fill=red] {};
\draw [fill=blue] (0,0)++(150:1.732)++(0.3,-0.2) circle(0pt) node{$B$};

\draw [fill=blue] (0,0)++(120-3.557:1.732) circle(2pt);
\draw [fill=blue] (0,0)++(120-3.557:1.732)++(0.3,-0.2) circle(0pt) node{$B'$};

\draw [fill=red] (0,0)++(270:1.732) node[diamond, scale=0.7, fill=red] {};
\draw [fill=blue] (0,0)++(270:1.732)++(0,-0.3) circle(0pt) node{$A$};

\draw [fill=blue] (0,0)++(240-3.557:1.732) circle(2pt);
\draw [fill=blue] (0,0)++(240-3.557:1.732)++(0,-0.3) circle(0pt) node{$A'$};

\end{tikzpicture}

\caption{}\label{f:redtr}

\end{subfigure}

\caption{Red points are denoted by diamonds, blue points are denoted by discs.}

\end{figure}

\begin{lemma}\label{l:redtr}
Let $\mathbb{E}^2$ be coloured in red and blue so that there is no red $\ell_2$. If there is no blue $\ell_5$, then there are no three red points forming an equilateral triangle with side length $\sqrt{3}$ and with a red centre.
\end{lemma}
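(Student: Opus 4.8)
The plan is to observe that for the stated side length the configuration already forces a red unit-distance pair, so that the contradiction is immediate and the hypothesis ``no blue $\ell_5$'' is in fact never invoked. The only geometric input I would use is the circumradius of an equilateral triangle: a triangle of side length $s$ has circumradius $R = s/\sqrt{3}$, so for $s = \sqrt{3}$ one gets $R = \sqrt{3}/\sqrt{3} = 1$. Equivalently, in an equilateral triangle of side length $\sqrt{3}$ the centre lies at distance exactly $1$ from each of the three vertices.

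With this in hand I would argue by contradiction. Suppose red points $A$, $B$, $C$ form an equilateral triangle of side length $\sqrt{3}$ and that its centre $O$ is also red. By the computation above, $|OA| = |OB| = |OC| = 1$. Then $O$ and $A$ are two red points at distance $1$, that is, a red $\ell_2$, contradicting the assumption that the colouring contains no red $\ell_2$. Hence no such configuration can occur, which is exactly the assertion of the lemma.

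I do not expect any genuine obstacle here: the sole quantitative step is the circumradius identity, and once the side length is set to $\sqrt{3}$ the forbidden distance $1$ is produced automatically between the red centre and every red vertex. The one feature worth recording is that, in contrast to Lemma~\ref{l:bluetr}, the hypothesis about blue $\ell_5$'s is superfluous here: the claim follows from ``no red $\ell_2$'' alone precisely because the circumradius coincides with the excluded distance. Had the triangle instead had side length $3$ (circumradius $\sqrt{3}$), as drawn in the right-hand part of Figure~1, the red centre would sit at distance $\sqrt{3} \neq 1$ from the vertices, no unit-distance red pair would be forced, and a genuinely substantive argument---using a triangular lattice together with the auxiliary blue points $A'$, $B'$, $C'$---would be required instead.
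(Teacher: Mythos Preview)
Your argument is entirely correct for the lemma \emph{as literally stated}: an equilateral triangle of side $\sqrt{3}$ has circumradius $1$, so a red centre together with any red vertex already gives a red $\ell_2$, and the hypothesis on blue $\ell_5$'s is indeed redundant. You have also put your finger on the real issue in your closing paragraph.

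The point is that the stated side length is a typo. Everywhere the lemma is invoked later in the paper (e.g.\ in the proofs of Lemmas~\ref{l:t3t6} and~\ref{l:col1}) it is applied to red equilateral triangles of side length~$3$ with a red centre, and Figure~1(b) shows exactly that situation: the vertices sit on a circle of radius $\sqrt{3}$ about $O$, hence at mutual distance~$3$. The paper's own proof is written for this intended version (and itself contains matching slips: ``blue points $A,B,C$'' should read ``red'', and the rotated triangle $A'B'C'$ has side~$3$, not~$\sqrt{3}$). Its argument is the one you anticipated: rotate the red triangle $ABC$ about the red centre $O$ so that each vertex moves by exactly~$1$; the images $A',B',C'$ are then forced blue (each is at distance~$1$ from a red vertex), and they form a blue equilateral triangle of side~$3$ with red centre~$O$, contradicting Lemma~\ref{l:bluetr}.

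So: your proof settles the printed statement, but the statement that the rest of the paper actually needs is the side-$3$ version, and for that your direct circumradius trick no longer applies; the rotation-to-Lemma~\ref{l:bluetr} reduction is required.
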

\begin{proof}
Suppose that $\mathbb{E}^2$ is coloured in red and blue so that there is no red $\ell_2$ and no blue $\ell_5$.
Suppose that blue points $A$, $B$ and $C$ form an equilateral triangle with side length $3$ and with red centre $O$. Let $A'$, $B'$, $C'$ be the images of $A$, $B$ and $C$, respectively, under a rotation about $O$ so that $AA'=BB'=CC'=1$ (see Figure 1(b)). Then $A'$, $B'$, $C'$ are blue and form an equilateral triangle with side length $\sqrt{3}$ and red center $O$, which contradicts the result of Lemma~\ref{l:bluetr}.
\end{proof}

Define $\mathfrak{T}_3$, $\mathfrak{T}_4$, $\mathfrak{T}_5$, $\mathfrak{T}_6$, $\mathfrak{T}_7$ to be the configurations of three, four, five, six and seven points (respectively), depicted in Figure~\ref{f:figures} (all the smallest distances between the points are equal to $\sqrt{3}$). 

\begin{figure}[H]
\begin{center}
\begin{tikzpicture}[line cap=round,line join=round,>=triangle 45,x=1.0cm,y=1.0cm,scale=0.5]
\clip(-3,-7) rectangle (16,3.5);

\foreach \a in {1,2,...,10}
\draw [dashed, opacity=0.2] (-10,0)++(240:10)++(60:\a)++(120:\a)--+(32,0);

\foreach \a in {1,2,...,10}
\draw [dashed, opacity=0.2] (-10,0)++(240:10)++(60:\a+1)++(120:\a)--+(32,0);

\foreach \a in {1,2,...,30}
\draw [dashed, opacity=0.2] (-10,0)++(240:10)++(\a,0)--+(60:30);

\foreach \a in {1,2,...,40}
\draw [dashed, opacity=0.2] (-10,0)++(240:10)++(\a,0)--+(120:30);

\draw [fill=red] (0,0) node[diamond, scale=0.7, fill=red] {};
\draw [fill=red] (-1,0)++(120:1) node[diamond, scale=0.7, fill=red] {};
\draw [fill=red] (-1,0)++(60:2) node[diamond, scale=0.7, fill=red] {};
\draw [fill=blue] (0,0)++(60:-1) circle(0pt) node{$\mathfrak{T}_3$};

\draw [fill=red] (5,0) node[diamond, scale=0.7, fill=red] {};
\draw [fill=red] (4,0)++(120:1) node[diamond, scale=0.7, fill=red] {};
\draw [fill=red] (4,0)++(60:2) node[diamond, scale=0.7, fill=red] {};
\draw [fill=red] (6,0)++(60:1) node[diamond, scale=0.7, fill=red] {};
\draw [fill=blue] (5.5,0)++(60:-1) circle(0pt) node{$\mathfrak{T}_4$};

\draw [fill=red] (12,0) node[diamond, scale=0.7, fill=red] {};
\draw [fill=red] (11,0)++(120:1) node[diamond, scale=0.7, fill=red] {};
\draw [fill=red] (11,0)++(60:2) node[diamond, scale=0.7, fill=red] {};
\draw [fill=red] (13,0)++(60:1) node[diamond, scale=0.7, fill=red] {};
\draw [fill=red] (12,0)++(120:3) node[diamond, scale=0.7, fill=red] {};
\draw [fill=blue] (12.5,0)++(60:-1) circle(0pt) node{$\mathfrak{T}_5$};

\draw [fill=red] (2,0)++(60:-4) node[diamond, scale=0.7, fill=red] {};
\draw [fill=red] (3,0)++(60:-3) node[diamond, scale=0.7, fill=red] {};
\draw [fill=red] (4,0)++(60:-2) node[diamond, scale=0.7, fill=red] {};
\draw [fill=red] (4,0)++(60:-5) node[diamond, scale=0.7, fill=red] {};
\draw [fill=red] (5,0)++(60:-4) node[diamond, scale=0.7, fill=red] {};
\draw [fill=red] (6,0)++(60:-6) node[diamond, scale=0.7, fill=red] {};
\draw [fill=blue] (5,0)++(60:-7) circle(0pt) node{$\mathfrak{T}_6$};

\draw [fill=red] (9,0)++(60:-2) node[diamond, scale=0.7, fill=red] {};
\draw [fill=red] (10,0)++(60:-4) node[diamond, scale=0.7, fill=red] {};
\draw [fill=red] (11,0)++(60:-3) node[diamond, scale=0.7, fill=red] {};
\draw [fill=red] (12,0)++(60:-5) node[diamond, scale=0.7, fill=red] {};
\draw [fill=red] (13,0)++(60:-4) node[diamond, scale=0.7, fill=red] {};
\draw [fill=red] (14,0)++(60:-6) node[diamond, scale=0.7, fill=red] {};
\draw [fill=red] (15,0)++(60:-5) node[diamond, scale=0.7, fill=red] {};
\draw [fill=blue] (13,0)++(60:-7) circle(0pt) node{$\mathfrak{T}_7$};

\end{tikzpicture}
\end{center}
\caption{}\label{f:figures}
\end{figure}

\begin{lemma}\label{l:t7}
Let $\mathbb{E}^2$ be coloured in red and blue so that there is no red $\ell_2$. If there is no blue $\ell_5$, then there are no seven red points forming a $\mathfrak{T}_7$.
\end{lemma}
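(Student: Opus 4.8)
The plan is a proof by contradiction: suppose all seven points of $\mathfrak{T}_{7}$ are red. I will use three forcing rules. \textbf{(B)} Any point at distance $1$ from a red point must be blue, so the unit circle around each of the seven red points consists entirely of blue points. \textbf{(R)} If a red point $O$ is the centre of an equilateral triangle of side $3$ two of whose vertices are already known to be red, then its third vertex is blue (Lemma~\ref{l:redtr}); and if two of its vertices are known to be blue, then its third vertex is red (Lemma~\ref{l:bluetr}). \textbf{(L)} If four blue points lie on a line at consecutive unit distances, then each of the two points continuing that segment is red, as otherwise we obtain a blue $\ell_{5}$.

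First I would build a layer of blue points around $\mathfrak{T}_{7}$. The five equilateral cells of side $\sqrt{3}$ contained in $\mathfrak{T}_{7}$ each have a centroid at distance exactly $1$ from its three red vertices, so by (B) those five centroids are blue. Each interior point of $\mathfrak{T}_{7}$ is moreover the centre of two equilateral triangles of side $3$ --- the two triangles inscribed in the hexagon of its nearest lattice neighbours --- and in each of these triangles two vertices already lie in $\mathfrak{T}_{7}$; so by (R) the remaining vertex of each is blue. Together with the unit circles, this places blue points on a handful of lines parallel to the two rows of $\mathfrak{T}_{7}$.

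Then I would search these lines (and the others through the cluster) for five consecutive unit-spaced blue points: finding them gives a blue $\ell_{5}$ and we are done. Otherwise, wherever we find four such blue points, (L) creates two new red points; feeding these back into (B) produces further blue points, (R) may then bite again on the newly completed configurations, yielding fresh blue $\ell_{4}$'s and hence fresh red points, and so on. I would run this loop until it forces either two red points at distance $1$ (a red $\ell_{2}$) or a red equilateral triangle of side $3$ with a red centre (contradicting Lemma~\ref{l:redtr}) --- one of which must occur because the red points keep accumulating in a bounded region while ``no red $\ell_{2}$'' keeps them spread out.

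The hard part is controlling this iteration. The one-shot forcing of the second paragraph produces blue points only in runs of three --- exactly one short of the run of four that (L) needs --- so the argument is genuinely iterative, and one must both pick the right line to extend at each stage and verify that the process terminates. The tidiest way I can see to do this is to fix at the outset a unit triangular lattice $\Lambda\supseteq\mathfrak{T}_{7}$ (possible since $\sqrt{3}$ is a distance realised in such a lattice), observe that every point ever produced by (B), (R), (L) is a point of $\Lambda$, and run the whole argument as a finite induction over the lattice points in a fixed bounded neighbourhood of $\mathfrak{T}_{7}$.
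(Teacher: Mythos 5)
There is a genuine gap: your argument is a search strategy, not a proof. Everything hinges on the final claim that iterating (B), (R), (L) ``must'' force a red $\ell_2$ or a red side-$3$ triangle with red centre, and that claim is unsubstantiated --- in fact the deterministic closure of your three rules, started from a red $\mathfrak{T}_7$, stalls immediately. Carrying out your second paragraph explicitly (place the seven red points on the lattice and mark blue all unit neighbours plus the third vertices forced by Lemma~\ref{l:redtr}), one finds that every lattice line through the cluster contains blue runs of length at most three, so (L) never fires; and every side-$3$ triangle centred at one of the seven red points already contains a red vertex, so the ``two blue vertices force a red third vertex'' half of (R) never fires either. Thus no new red point is ever produced, the ``loop'' never iterates, and the assertion that ``the red points keep accumulating in a bounded region'' is false for the procedure you describe. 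To make progress you would have to branch on undetermined points (``if $P$ is red then \dots, so $P$ is blue, then \dots''), i.e.\ supply a concrete case analysis of the kind the paper performs in Lemmas~\ref{l:t3t6}--\ref{l:col2}, together with a termination/contradiction argument; none of that is in the proposal. (A minor further point: you quote Lemma~\ref{l:redtr} for side-$3$ triangles, which matches how the paper actually uses it, but note its printed statement says $\sqrt{3}$.)

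It is also worth noting that your self-imposed restriction to the lattice $\Lambda$ discards exactly the idea the paper's proof of Lemma~\ref{l:t7} relies on: it leaves the lattice. The paper reflects $F$ in $BC$ to get $X$, then rotates $\{X,A,F\}$ about $B$ and $\{X,D,F\}$ about $C$ through the (non-lattice) angle that moves points at distance $\sqrt{3}$ by distance $1$; Lemma~\ref{l:bluetr} forces both images $X'$, $X''$ of $X$ to be red, and since $XX'X''$ is a unit equilateral triangle, $X'X''$ is a red $\ell_2$. That three-line rotation trick is what lets the lemma be proved without any iterative forcing, and some substitute for it (or a full finite case analysis) is needed before your outline becomes a proof.
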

\begin{proof}
Suppose that $\mathbb{E}^2$ is coloured in red and blue so that there is no red $\ell_2$ and no blue $\ell_5$.
Suppose that $A$, $B$, $C$, $D$, $E$, $F$ and $G$ are red points forming a $\mathfrak{T}_7$ (as in Figure~\ref{f:t7}).
Let $X$ be the reflection of $F$ in $BC$.
Let $X'$, $A'$, $F'$ be the images of $X$, $A$, $F$, respectively, under the clockwise rotation about $B$ such that $XX'=AA'=FF'=1$. Since $A$ and $F$ are red, $A'$ and $F'$ are blue. If $X'$ is blue, then $X'A'F'$ is a blue equilateral triangle with side length $3$ and red center $B$, which contradicts the result of Lemma~\ref{l:bluetr}. Therefore, $X'$ is red.
Let $X''$, $D''$, $F''$ be the images of $X$, $D$, $F$, respectively, under the clockwise rotation about $C$ such that $XX''=DD''=FF''=1$. Since $D$ and $F$ are red, $D''$ and $F''$ are blue. If $X''$ is blue, then $X''D''F''$ is a blue equilateral triangle with side length $3$ and red center $C$, which contradicts the result of Lemma~\ref{l:bluetr}. Therefore, $X''$ is red.
Since $X'$ can be obtained from $X''$ by the clockwise rotation through $60\degree$ about $X$, $XX'X''$ is a unit equilateral triangle, hence $X'X''$ is a red $\ell_2$, which contradicts the assumption of the lemma.
\end{proof}

\begin{figure}[H]
\begin{center}
\begin{tikzpicture}[line cap=round,line join=round,>=triangle 45,x=1.0cm,y=1.0cm,scale=1]
\clip(-2,-2.5) rectangle (4,3);

\draw [dashed, color=black] (0,0) circle (1.732cm);
\draw [dashed, color=black] (0,0)++(30:1)++(-30:1) circle (1.732cm);

\draw [fill=red] (0,0) node[diamond, scale=0.7, fill=red] {};
\draw [fill=blue] (0,0)++(0.3,-0.2) circle(0pt) node{$B$};

\draw [fill=red] (1.732,0) node[diamond, scale=0.7, fill=red] {};
\draw [fill=blue] (1.732,0)++(0.3,-0.2) circle(0pt) node{$C$};

\draw [fill=red] (-1.732,0) node[diamond, scale=0.7, fill=red] {};
\draw [fill=blue] (-1.732,0)++(0.3,-0.2) circle(0pt) node{$A$};

\draw [fill=blue] (180-33.557:1.732) circle(2pt);
\draw [fill=blue] (180-33.557:1.732)++(0.3,-0.2) circle(0pt) node{$A'$};

\draw [fill=red] (3.464,0) node[diamond, scale=0.7, fill=red] {};
\draw [fill=blue] (3.464,0)++(0.3,-0.2) circle(0pt) node{$D$};

\draw [fill=blue] (60:1.732) circle(2pt);
\draw [fill=blue] (60:1.732)++(0.1,-0.3) circle(0pt) node{$X$};

\draw [fill=blue] (60-33.557:1.732) node[diamond, scale=0.7, fill=red] {};
\draw [fill=blue] (60-33.557:1.732)++(0.3,-0.2) circle(0pt) node{$X'$};

\draw [fill=blue] (-60:1.732) node[diamond, scale=0.7, fill=red] {};
\draw [fill=blue] (-60:1.732)++(0,-0.3) circle(0pt) node{$F$};

\draw [fill=blue] (-60-33.557:1.732) circle(2pt);
\draw [fill=blue] (-60-33.557:1.732)++(0,-0.3) circle(0pt) node{$F'$};

\draw [fill=blue] (-120:1.732) node[diamond, scale=0.7, fill=red] {};
\draw [fill=blue] (-120:1.732)++(-0.3,-0.2) circle(0pt) node{$E$};

\draw [fill=blue] (1.732,0)++(-60:1.732) node[diamond, scale=0.7, fill=red] {};
\draw [fill=blue] (1.732,0)++(-60:1.732)++(0.3,-0.2) circle(0pt) node{$G$};

\draw [fill=blue] (1.732,0)++(-33.557:1.732) circle(2pt);
\draw [fill=blue] (1.732,0)++(-33.557:1.732)++(0.3,-0.2) circle(0pt) node{$D''$};

\draw [fill=blue] (1.732,0)++(120-33.557:1.732) node[diamond, scale=0.7, fill=red] {};
\draw [fill=blue] (1.732,0)++(120-33.557:1.732)++(0.3,-0.3) circle(0pt) node{$X''$};

\draw [fill=blue] (1.732,0)++(-120-33.557:1.732) circle(2pt);
\draw [fill=blue] (1.732,0)++(-120-33.557:1.732)++(0.4,-0.1) circle(0pt) node{$F''$};

\end{tikzpicture}
\end{center}
\caption{}\label{f:t7}
\end{figure}

\begin{lemma}\label{l:t3t6}
Let $\mathbb{E}^2$ be coloured in red and blue so that there is no red $\ell_2$. Let $A$, $B$, $C$ be three red points forming a $\mathfrak{T}_3$. If there is no blue $\ell_5$, then there exists a red $\mathfrak{T}_6$ that contains $\{A,B,C\}$ as a subset.
\end{lemma}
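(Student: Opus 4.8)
Normalise so that $A,B,C$ are red with $|AB|=|BC|=|CA|=\sqrt 3$, and let $O$ be the common centre. The circumradius of an equilateral triangle of side $\sqrt3$ equals $1$, so $|OA|=|OB|=|OC|=1$; since $A,B,C$ are red and there is no red $\ell_2$, the point $O$ is blue, and more generally every point at unit distance from one of $A,B,C$ is blue. In particular the ``antipodes'' $A^{*}=2O-A$, $B^{*}=2O-B$, $C^{*}=2O-C$ are blue (each is at distance $1$ from two of $A,B,C$). A short check shows that, up to relabelling, $\{A,B,C\}$ can be completed to a copy of $\mathfrak T_6$ in exactly four ways: the \emph{central} completion adds the reflections $A'=B+C-A$, $B'=C+A-B$, $C'=A+B-C$ of each vertex in the midpoint of the opposite side, while each of the three \emph{corner} completions adds one such reflection together with two further points, for instance $\{A',\,2B-A,\,2C-A\}$. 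One checks that these nine ``extra'' points are precisely the points at distance $\sqrt3$ from $A$, from $B$, or from $C$, other than $A,B,C$ themselves. It suffices to prove that at least one of the four completions consists entirely of red points; suppose not.

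\textbf{The rotation engine.} The plan is to lean on the device from the proof of Lemma~\ref{l:t7}. If $Z$ is red, $P,Q$ are red with $|ZP|=|ZQ|=\sqrt3$ and $\angle PZQ=120^{\circ}$, and $P_{0}$ is the third vertex of the equilateral triangle inscribed in the circle of radius $\sqrt3$ about $Z$ through $P,Q$, then rotating $\{P,Q,P_{0}\}$ about $Z$ through the angle whose chord is $1$ sends $P,Q$ to blue points, so $P_{0}$ is sent to a point which must be red — otherwise we would have a blue equilateral triangle of side $3$ with red centre $Z$, contradicting Lemma~\ref{l:bluetr}. Applied with $Z\in\{A,B,C\}$: for each of these red points the two inscribed equilateral triangles in its radius-$\sqrt3$ circle each contain exactly one of $A,B,C$ and two of the extra points, so as soon as a second vertex (some extra point) is known to be red, the device produces a new red point and can be iterated. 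Moreover, running this for a point $X$ with $\triangle BXC$ equilateral of side $\sqrt3$ — there are exactly two such $X$, namely $A$ and $A'$ — exactly as in Lemma~\ref{l:t7} the two images of $X$ under the rotations about $B$ and about $C$ lie at distance $1$; hence if the relevant partner vertices are red for $B$ and for $C$ we obtain a red $\ell_2$. Taking $X=A'$ this shows that $2B-C$ and $2C-B$ are not both red, and similarly within each of the pairs $\{2C-A,2A-C\}$ and $\{2A-B,2B-A\}$ at least one point is blue.

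\textbf{Finishing, and the main obstacle.} To start the engine one needs some extra point to be red, and this is where the ``no blue $\ell_5$'' hypothesis enters. The quadruple $A,O,A^{*},A'$ is collinear with consecutive unit spacing (and likewise through $B$ and through $C$), while the line through $C'$ perpendicular to $AB$ already carries the two forced blue points where it meets the unit circle about $B$, with $2B-A$ a fourth point of that line; feeding the assumed colours of $C'$ and $2B-A$ into such lines creates runs of four collinear blue points, which force red points just past their ends. The plan is to track a handful of these forced red points, feed them into the rotation engine around $A,B,C$, and reach in every branch either a red $\ell_2$, a red equilateral triangle of side $\sqrt3$ with a red centre (contradicting Lemma~\ref{l:redtr}), or a red $\mathfrak T_7$ (contradicting Lemma~\ref{l:t7}). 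The genuine difficulty is precisely that the obvious rotation centre $O$ is blue, so nothing is free: one must first extract a red extra point from the colouring, and then control the branching over which of $A',B',C'$ — or which far point — is blue. Keeping this case analysis and the attendant bookkeeping on the triangular lattice organised, rather than any single clever step, is the crux of the argument.
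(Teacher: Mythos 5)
Your setup is fine (the four possible completions of $\{A,B,C\}$ to a $\mathfrak{T}_6$, and the reduction to showing one of them is entirely red), and the ``rotation engine'' exclusions are correct instances of the trick behind Lemmas~\ref{l:bluetr} and~\ref{l:t7}: in each pair $\{2B-C,2C-B\}$, $\{2C-A,2A-C\}$, $\{2A-B,2B-A\}$ at most one point is red. But the genuine gap is that these exclusions only \emph{forbid} red points; they never \emph{produce} any, and producing three red points forming one of the four completions is the entire content of the lemma. Your final paragraph, where the ``no blue $\ell_5$'' hypothesis is supposed to do this, is an announced plan rather than an argument: you say one should ``track a handful of these forced red points \dots{} and reach in every branch'' a contradiction, and you explicitly identify the organised case analysis as ``the crux of the argument'' --- and then omit it. The paper carries exactly that analysis out, in three concrete stages inside the unit triangular lattice containing $A,B,C$: first a red point completing a $\mathfrak{T}_4$ is forced (Figure~\ref{f:t3t4}, via the explicit blue $\ell_5$'s $LMYGH$, $KJIZN$, $PQFEX$), then a red $\mathfrak{T}_5$ (Figure~\ref{f:t4t5}), and finally the sixth red point (Figure~\ref{f:t5t6}, using Lemmas~\ref{l:redtr} and~\ref{l:t7}). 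Nothing of comparable specificity appears in your write-up, so the existence claim remains unproved.

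Two further points in the sketch would need repair even as a sketch. The line you invoke is misdescribed: with $C'=A+B-C$, the line through $C'$ perpendicular to $AB$ is the perpendicular bisector of $AB$ and contains $C$ (red) but not $2B-A$; the line you presumably intend is the one through $C'$ and $2B-A$, which is perpendicular to $BC$ and does carry two forced blue points of the unit circle about $B$ between its endpoints --- but even with that correction you still face a branching over the unknown colours of $C'$ and $2B-A$, which is precisely the bookkeeping you defer. Also, the ``new red point'' output by the rotation engine is an off-lattice point (the rotated image, like $X'$, $X''$ in Lemma~\ref{l:t7}), so it cannot simply be ``iterated'' as a partner vertex for further applications centred at $A$, $B$, $C$; in the paper such rotated points are only ever used in matched pairs to manufacture a red $\ell_2$. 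As it stands, the proposal assembles the right tools but does not prove the lemma.
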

\begin{proof}
Suppose that $\mathbb{E}^2$ is coloured in red and blue so that there is no red $\ell_2$ and no blue $\ell_5$.
Let $A$, $B$, $C$ be three red points forming a $\mathfrak{T}_3$. Consider the unit triangular lattice depicted in Figure~\ref{f:t3t4}.\\
Suppose that there is no red point $D$ such that $A$, $B$, $C$, $D$ form a $\mathfrak{T}_4$. Then points $X$, $Y$, $Z$ are blue. Points $E$, $F$, $G$, $H$, $I$, $J$ are blue, since each of them is distance $1$ apart from a red point. If the point $K$ is red, then the points $L$ and $M$ are blue and $LMYGH$ is a blue $\ell_5$. Therefore, $K$ is blue. Then $N$ is red (otherwise $KJIZN$ is a blue $\ell_5$), hence $P$ and $Q$ are blue, which leads to a blue $\ell_5$ $PQFEX$. A contradiction is obtained, therefore there exists a red point $D$ such that $A$, $B$, $C$, $D$ form a $\mathfrak{T}_4$.\\

\begin{figure}[H]
\begin{center}
\begin{tikzpicture}[line cap=round,line join=round,>=triangle 45,x=1.0cm,y=1.0cm,scale=0.7]
\clip(-2.5,-2.5) rectangle (4,4);

\foreach \a in {1,2,...,10}
\draw [dashed, opacity=0.3] (-10,0)++(240:10)++(60:\a)++(120:\a)--+(32,0);

\foreach \a in {1,2,...,10}
\draw [dashed, opacity=0.3] (-10,0)++(240:10)++(60:\a+1)++(120:\a)--+(32,0);

\foreach \a in {1,2,...,30}
\draw [dashed, opacity=0.3] (-10,0)++(240:10)++(\a,0)--+(60:30);

\foreach \a in {1,2,...,30}
\draw [dashed, opacity=0.3] (-10,0)++(240:10)++(\a,0)--+(120:30);

\draw [fill=red] (0,0) node[diamond, scale=0.7, fill=red] {};
\draw [fill=blue] (0,0)++(0.3,-0.2) circle(0pt) node{$A$};

\draw [fill=red] (1,0)++(60:1) node[diamond, scale=0.7, fill=red] {};
\draw [fill=blue] (1,0)++(60:1)++(0.3,-0.2) circle(0pt) node{$B$};

\draw [fill=red] (2,0)++(60:-1) node[diamond, scale=0.7, fill=red] {};
\draw [fill=blue] (2,0)++(60:-1)++(0.3,-0.2) circle(0pt) node{$C$};

\draw [fill=blue] (-1,0) circle(2pt);
\draw [fill=blue] (-1,0)++(0.3,-0.2) circle(0pt) node{$J$};

\draw [fill=blue] (-1,0)++(60:1) circle(2pt);
\draw [fill=blue] (-1,0)++(60:1)++(0.3,-0.2) circle(0pt) node{$I$};

\draw [fill=blue] (-1,0)++(60:2) circle(2pt);
\draw [fill=blue] (-1,0)++(60:2)++(0.3,-0.2) circle(0pt) node{$Z$};

\draw [fill=white] (-1,0)++(60:3) circle(2pt);
\draw [fill=blue] (-1,0)++(60:3)++(0.3,-0.2) circle(0pt) node{$N$};

\draw [fill=white] (-1,0)++(60:4) circle(2pt);
\draw [fill=blue] (-1,0)++(60:4)++(0.3,-0.2) circle(0pt) node{$P$};

\draw [fill=white] (-1,0)++(60:-1) circle(2pt);
\draw [fill=blue] (-1,0)++(60:-1)++(0.3,-0.2) circle(0pt) node{$K$};

\draw [fill=white] (-1,0)++(60:-2) circle(2pt);
\draw [fill=blue] (-1,0)++(60:-2)++(0.3,-0.2) circle(0pt) node{$L$};

\draw [fill=white] (0,0)++(60:-2) circle(2pt);
\draw [fill=blue] (0,0)++(60:-2)++(0.3,-0.2) circle(0pt) node{$M$};

\draw [fill=blue] (1,0)++(60:-2) circle(2pt);
\draw [fill=blue] (1,0)++(60:-2)++(0.3,-0.2) circle(0pt) node{$Y$};

\draw [fill=blue] (2,0)++(60:-2) circle(2pt);
\draw [fill=blue] (2,0)++(60:-2)++(0.3,-0.2) circle(0pt) node{$G$};

\draw [fill=blue] (3,0)++(60:-2) circle(2pt);
\draw [fill=blue] (3,0)++(60:-2)++(0.3,-0.2) circle(0pt) node{$H$};

\draw [fill=blue] (3,0) circle(2pt);
\draw [fill=blue] (3,0)++(0.3,-0.2) circle(0pt) node{$X$};

\draw [fill=blue] (3,0)++(120:1) circle(2pt);
\draw [fill=blue] (3,0)++(120:1)++(0.3,-0.2) circle(0pt) node{$E$};

\draw [fill=blue] (3,0)++(120:2) circle(2pt);
\draw [fill=blue] (3,0)++(120:2)++(0.3,-0.2) circle(0pt) node{$F$};

\draw [fill=white] (3,0)++(120:3) circle(2pt);
\draw [fill=blue] (3,0)++(120:3)++(0.3,-0.2) circle(0pt) node{$Q$};

\end{tikzpicture}
\end{center}
\caption{}\label{f:t3t4}
\end{figure}

Let $A$, $B$, $C$, $D$ form a red $\mathfrak{T}_4$. Consider the part of the unit triangular lattice depicted in Figure~\ref{f:t4t5}. Suppose that there is no red point $E$ such that $A$, $B$, $C$, $D$, $E$ form a $\mathfrak{T}_5$. Then the points $X$, $F$ and $G$ are blue. Points $H$, $I$, $K$, $L$, $M$, $N$ are blue, since each of them is distance $1$ apart from a red point. Point $P$ is red (otherwise $FHIGP$ is a blue $\ell_5$), therefore $Q$ and $R$ are blue. Then $X$, $N$, $M$, $Q$, $R$ form a blue $\ell_5$, which gives a contradiction. Hence, there exists a red point $E$ such that $A$, $B$, $C$, $D$, $E$ form a $\mathfrak{T}_5$.\\

\begin{figure}[H]
\begin{center}
\begin{tikzpicture}[line cap=round,line join=round,>=triangle 45,x=1.0cm,y=1.0cm,scale=0.7]
\clip(-0.5,-2.5) rectangle (6,2.5);

\foreach \a in {1,2,...,10}
\draw [dashed, opacity=0.3] (-10,0)++(240:10)++(60:\a)++(120:\a)--+(32,0);

\foreach \a in {1,2,...,10}
\draw [dashed, opacity=0.3] (-10,0)++(240:10)++(60:\a+1)++(120:\a)--+(32,0);

\foreach \a in {1,2,...,30}
\draw [dashed, opacity=0.3] (-10,0)++(240:10)++(\a,0)--+(60:30);

\foreach \a in {1,2,...,30}
\draw [dashed, opacity=0.3] (-10,0)++(240:10)++(\a,0)--+(120:30);

\draw [fill=red] (0,0) node[diamond, scale=0.7, fill=red] {};
\draw [fill=blue] (0,0)++(0.3,-0.2) circle(0pt) node{$A$};

\draw [fill=red] (1,0)++(60:1) node[diamond, scale=0.7, fill=red] {};
\draw [fill=blue] (1,0)++(60:1)++(0.3,-0.2) circle(0pt) node{$B$};

\draw [fill=red] (2,0)++(60:-1) node[diamond, scale=0.7, fill=red] {};
\draw [fill=blue] (2,0)++(60:-1)++(0.3,-0.2) circle(0pt) node{$C$};

\draw [fill=red] (3,0) node[diamond, scale=0.7, fill=red] {};
\draw [fill=blue] (3,0)++(0.3,-0.2) circle(0pt) node{$D$};

\draw [fill=blue] (0,0)++(60:2) circle(2pt);
\draw [fill=blue] (0,0)++(60:2)++(0.3,-0.2) circle(0pt) node{$K$};

\draw [fill=blue] (1,0)++(60:2) circle(2pt);
\draw [fill=blue] (1,0)++(60:2)++(0.3,-0.2) circle(0pt) node{$L$};

\draw [fill=blue] (2,0)++(60:2) circle(2pt);
\draw [fill=blue] (2,0)++(60:2)++(0.3,-0.2) circle(0pt) node{$X$};

\draw [fill=blue] (1,0)++(60:-2) circle(2pt);
\draw [fill=blue] (1,0)++(60:-2)++(0.3,-0.2) circle(0pt) node{$F$};

\draw [fill=blue] (2,0)++(60:-2) circle(2pt);
\draw [fill=blue] (2,0)++(60:-2)++(0.3,-0.2) circle(0pt) node{$H$};

\draw [fill=blue] (3,0)++(60:-2) circle(2pt);
\draw [fill=blue] (3,0)++(60:-2)++(0.3,-0.2) circle(0pt) node{$I$};

\draw [fill=blue] (4,0)++(60:-2) circle(2pt);
\draw [fill=blue] (4,0)++(60:-2)++(0.3,-0.2) circle(0pt) node{$G$};

\draw [fill=white] (5,0)++(60:-2) circle(2pt);
\draw [fill=blue] (5,0)++(60:-2)++(0.3,-0.2) circle(0pt) node{$P$};

\draw [fill=white] (6,0)++(60:-2) circle(2pt);
\draw [fill=blue] (6,0)++(60:-2)++(0.3,-0.2) circle(0pt) node{$R$};

\draw [fill=blue] (4,0) circle(2pt);
\draw [fill=blue] (4,0)++(0.3,-0.2) circle(0pt) node{$M$};

\draw [fill=blue] (4,0)++(120:1) circle(2pt);
\draw [fill=blue] (4,0)++(120:1)++(0.3,-0.2) circle(0pt) node{$N$};

\draw [fill=white] (4,0)++(120:-1) circle(2pt);
\draw [fill=blue] (4,0)++(120:-1)++(0.3,-0.2) circle(0pt) node{$Q$};

\end{tikzpicture}
\end{center}
\caption{}\label{f:t4t5}
\end{figure}

Let $A$, $B$, $C$, $D$, $E$ form a $\mathfrak{T}_5$ (Figure~\ref{f:t5t6}). Suppose that $F$ is blue. By Lemma~\ref{l:redtr}, points $X$ and $Y$ are blue (otherwise $X$, $E$, $C$ ($Y$, $A$, $D$) form a red triangle with side length $3$ and red center $B$). Points $G$, $H$, $I$, $J$, $K$, $L$, $M$, $N$ are blue, since each one of them is at distance $1$ from a red point. If point $P$ is blue, then $Q$ is red (otherwise $QPKLF$ is a blue $\ell_5$), $U$ and $T$ are blue and form a blue $\ell_5$ with points $G$, $H$ and $X$. Therefore, $P$ is red. Similarly, $R$ is red (otherwise $S$ is red and $VWJIY$ is a blue $\ell_5$). Then $A$, $B$, $C$, $D$, $E$, $P$ and $R$ form a red $\mathfrak{T}_7$, which is not possible by Lemma~\ref{l:t7}. Therefore, $F$ is red and $A$, $B$, $C$, $D$, $E$, $F$ form a red $\mathfrak{T}_6$.

\begin{figure}[H]
\begin{center}
\begin{tikzpicture}[line cap=round,line join=round,>=triangle 45,x=1.0cm,y=1.0cm,scale=0.7]
\clip(-2.5,-2.5) rectangle (6.5,3);

\foreach \a in {1,2,...,10}
\draw [dashed, opacity=0.3] (-10,0)++(240:10)++(60:\a)++(120:\a)--+(32,0);

\foreach \a in {1,2,...,10}
\draw [dashed, opacity=0.3] (-10,0)++(240:10)++(60:\a+1)++(120:\a)--+(32,0);

\foreach \a in {1,2,...,30}
\draw [dashed, opacity=0.3] (-10,0)++(240:10)++(\a,0)--+(60:30);

\foreach \a in {1,2,...,30}
\draw [dashed, opacity=0.3] (-10,0)++(240:10)++(\a,0)--+(120:30);

\draw [fill=red] (0,0) node[diamond, scale=0.7, fill=red] {};
\draw [fill=blue] (0,0)++(0.3,-0.2) circle(0pt) node{$A$};

\draw [fill=red] (1,0)++(60:1) node[diamond, scale=0.7, fill=red] {};
\draw [fill=blue] (1,0)++(60:1)++(0.3,-0.2) circle(0pt) node{$B$};

\draw [fill=red] (2,0)++(60:-1) node[diamond, scale=0.7, fill=red] {};
\draw [fill=blue] (2,0)++(60:-1)++(0.3,-0.2) circle(0pt) node{$C$};

\draw [fill=red] (3,0) node[diamond, scale=0.7, fill=red] {};
\draw [fill=blue] (3,0)++(0.3,-0.2) circle(0pt) node{$D$};

\draw [fill=red] (2,0)++(60:2) node[diamond, scale=0.7, fill=red] {};
\draw [fill=blue] (2,0)++(60:2)++(0.3,-0.2) circle(0pt) node{$E$};

\draw [fill=blue] (-1,0) circle(2pt);
\draw [fill=blue] (-1,0)++(0.3,-0.2) circle(0pt) node{$G$};

\draw [fill=blue] (-1,0)++(60:1) circle(2pt);
\draw [fill=blue] (-1,0)++(60:1)++(0.3,-0.2) circle(0pt) node{$H$};

\draw [fill=blue] (-1,0)++(60:2) circle(2pt);
\draw [fill=blue] (-1,0)++(60:2)++(0.3,-0.2) circle(0pt) node{$X$};

\draw [fill=white] (-1,0)++(60:-1) circle(2pt);
\draw [fill=blue] (-1,0)++(60:-1)++(0.3,-0.2) circle(0pt) node{$U$};

\draw [fill=white] (-1,0)++(60:-2) circle(2pt);
\draw [fill=blue] (-1,0)++(60:-2)++(0.3,-0.2) circle(0pt) node{$T$};

\draw [fill=white] (0,0)++(60:-2) circle(2pt);
\draw [fill=blue] (0,0)++(60:-2)++(0.3,-0.2) circle(0pt) node{$Q$};

\draw [fill=white] (1,0)++(60:-2) circle(2pt);
\draw [fill=blue] (1,0)++(60:-2)++(0.3,-0.2) circle(0pt) node{$P$};

\draw [fill=blue] (2,0)++(60:-2) circle(2pt);
\draw [fill=blue] (2,0)++(60:-2)++(0.3,-0.2) circle(0pt) node{$K$};

\draw [fill=blue] (3,0)++(60:-2) circle(2pt);
\draw [fill=blue] (3,0)++(60:-2)++(0.3,-0.2) circle(0pt) node{$L$};

\draw [fill=blue] (4,0)++(60:-2) circle(2pt);
\draw [fill=blue] (4,0)++(60:-2)++(0.3,-0.2) circle(0pt) node{$F$};

\draw [fill=blue] (4,0) circle(2pt);
\draw [fill=blue] (4,0)++(0.3,-0.2) circle(0pt) node{$N$};

\draw [fill=blue] (4,0)++(60:-1) circle(2pt);
\draw [fill=blue] (4,0)++(60:-1)++(0.3,-0.2) circle(0pt) node{$M$};

\draw [fill=white] (4,0)++(60:1) circle(2pt);
\draw [fill=blue] (4,0)++(60:1)++(0.3,-0.2) circle(0pt) node{$R$};

\draw [fill=white] (4,0)++(60:2) circle(2pt);
\draw [fill=blue] (4,0)++(60:2)++(0.3,-0.2) circle(0pt) node{$S$};

\draw [fill=white] (4,0)++(60:3) circle(2pt);
\draw [fill=blue] (4,0)++(60:3)++(0.3,-0.2) circle(0pt) node{$V$};

\draw [fill=white] (3,0)++(60:3) circle(2pt);
\draw [fill=blue] (3,0)++(60:3)++(0.3,-0.2) circle(0pt) node{$W$};

\draw [fill=blue] (2,0)++(60:3) circle(2pt);
\draw [fill=blue] (2,0)++(60:3)++(0.3,-0.2) circle(0pt) node{$J$};

\draw [fill=blue] (1,0)++(60:3) circle(2pt);
\draw [fill=blue] (1,0)++(60:3)++(0.3,-0.2) circle(0pt) node{$I$};

\draw [fill=blue] (0,0)++(60:3) circle(2pt);
\draw [fill=blue] (0,0)++(60:3)++(0.3,-0.2) circle(0pt) node{$Y$};

\end{tikzpicture}
\end{center}
\caption{}\label{f:t5t6}
\end{figure}

\end{proof}

\begin{lemma}\label{l:col1}
Let $\mathbb{E}^2$ be coloured in red and blue so that there is no red $\ell_2$. Let $\mathfrak{L}$ be a unit triangular lattice that contains three red points forming a $\mathfrak{T}_3$. If there is no blue $\ell_5$, then the colouring of $\mathfrak{L}$ is unique (up to translation or rotation by a multiple of $60\degree$), and is depicted in Figure~\ref{f:tcol}.
\end{lemma}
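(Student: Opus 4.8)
The plan is to start from a single red $\mathfrak{T}_6$ and let the constraints propagate outward until the colour of every point of $\mathfrak{L}$ is forced. First I would fix the seed: the three vertices of the given red $\mathfrak{T}_3$ generate a sublattice $\Lambda\le\mathfrak{L}$ of index $3$, itself a triangular lattice of spacing $\sqrt3$, and by Lemma~\ref{l:t3t6} the $\mathfrak{T}_3$ extends to a red $\mathfrak{T}_6$; since every point of a $\mathfrak{T}_6$ sits at a $\Lambda$-lattice position relative to any $\mathfrak{T}_3$ it contains, this red $\mathfrak{T}_6$ is a size-$2$ triangular block $B_0$ of six points of $\Lambda$. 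Two structural facts then guide everything: a point of $\mathfrak{L}$ lies in $\Lambda$ exactly when all six of its unit-distance neighbours lie outside $\Lambda$; and each point of $\mathfrak{L}\setminus\Lambda$ is the centre of a small ($\sqrt3$-side) triangle of $\Lambda$, hence at distance exactly $1$ from its three corners. Consequently a point of $\mathfrak{L}\setminus\Lambda$ next to a red $\Lambda$-point is forced blue, a red point of $\mathfrak{L}\setminus\Lambda$ forces its three $\Lambda$-neighbours blue, red cannot be all of $\Lambda$ (else it would contain a red $\mathfrak{T}_7$, contradicting Lemma~\ref{l:t7}), and in fact red is not even contained in $\Lambda$ (if it were, running ``no blue $\ell_5$'' along one lattice line would force red to be \emph{all} of $\Lambda$, a contradiction); so the forced colouring is a genuine mixture of $\Lambda$-points and points off $\Lambda$.

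The engine I would use is four local forcing rules: (i) a point at distance $1$ from a red point is blue; (ii) a lattice line carrying four consecutive blue points forces the point just beyond either end to be red (no blue $\ell_5$); (iii) Lemma~\ref{l:redtr} forbids a red equilateral triangle of the relevant side length from having a red centre, forcing a point blue once such a red triangle appears around it; (iv) Lemma~\ref{l:t7} forbids a red $\mathfrak{T}_7$, so once six of its seven points are red the seventh is blue. Starting from $B_0$, rule (i) blues the four hole-centres inside $B_0$ and the ring of holes around it; rules (ii)--(iv), applied along the three lattice lines through $B_0$ and to the small triangles bordering $B_0$, then force the next shell of $\Lambda$-points together with the off-$\Lambda$ points beside them; iterating reaches a region containing a full fundamental domain of $\Lambda$ with enough overlap that every further point's colour is already determined by (i)--(iv) from points whose colours are known. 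The colouring is then $\Lambda$-periodic and coincides with the one in Figure~\ref{f:tcol}, and uniqueness up to translation and rotation by a multiple of $60^\circ$ is automatic, since every colour was forced --- the only initial choice being which translate and orientation of $\Lambda$ contained the original $\mathfrak{T}_3$.

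The hard part will be this propagation itself: one must check that rules (i)--(iv) never deadlock (at each stage at least one uncoloured point is forced) and never branch into genuinely inequivalent cases, and in particular that the red points \emph{off} $\Lambda$ are located correctly --- no single rule pins them down, and they emerge only from combining ``no blue $\ell_5$'' across several concurrent lines with Lemmas~\ref{l:redtr} and~\ref{l:t7}, exactly in the spirit of the forcing arguments in the proofs of Lemmas~\ref{l:t3t6} and~\ref{l:t7}. This is a finite but intricate case analysis, cleanest to carry out directly on the lattice picture of Figure~\ref{f:tcol}: once one fundamental domain (with overlap) has been verified, $\Lambda$-periodicity finishes the proof. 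One minor point to handle cleanly is that a given $\mathfrak{T}_3$ lies inside several different $\mathfrak{T}_6$'s and Lemma~\ref{l:t3t6} supplies only one of them, but all of them lie in the same sublattice $\Lambda$, so this ambiguity is harmless.
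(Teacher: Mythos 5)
Your overall strategy --- seed with the red $\mathfrak{T}_6$ from Lemma~\ref{l:t3t6} and propagate local forcing rules (distance $1$ from red forces blue, four consecutive blues force red beyond the ends, Lemmas~\ref{l:redtr} and~\ref{l:t7}) --- is the same in spirit as the paper's. The problem is that the entire content of the lemma is the propagation you defer as ``a finite but intricate case analysis'': nothing in your write-up shows \emph{where} the next red points are forced to appear, or that the rules neither deadlock nor branch. The paper's proof consists exactly of this missing computation (Figure~\ref{f:tcol1}): it proves that the translate $A'B'C'D'E'F'$ of the red $\mathfrak{T}_6$ by the vector of length $5$ along $\overrightarrow{AD}$ is again red, via the explicit chain $I,J$ blue by Lemma~\ref{l:redtr}; $K,L,M,N$ blue; $R$ blue; hence $A'$ red; hence $S_1,\dots,S_4$ blue and $B'$, $F'$ red; then $X,Y$ blue, and a second application of Lemma~\ref{l:t3t6} to the new triple $A'B'F'$ pins down the new $\mathfrak{T}_6$. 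Repeating this in the directions $\overrightarrow{EB}$ and $\overrightarrow{CF}$ gives all red blocks, and every remaining point is blue because it is at distance $1$ from a red point. Your proposal asserts that some such chain exists but does not produce it, so the key step is missing rather than proved.

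Moreover, your termination mechanism rests on a false structural claim: the colouring of Figure~\ref{f:tcol} is \emph{not} $\Lambda$-periodic for the $\sqrt3$-sublattice $\Lambda$ spanned by the initial $\mathfrak{T}_3$. The translation lattice of that colouring is generated by vectors of length $5$ along the unit lattice directions, and such a vector does not lie in $\Lambda$: along any lattice line of $\mathfrak{L}$ the points run through the three cosets of $\Lambda$ cyclically with period $3$, and $5\not\equiv 0 \pmod 3$, so the red $\mathfrak{T}_6$ blocks in neighbouring periods lie in \emph{different} cosets of $\Lambda$; concretely, translating the initial block by a generator of $\Lambda$ sends some of its red points to blue points of Figure~\ref{f:tcol}. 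Hence ``verify one fundamental domain of $\Lambda$ with overlap, then $\Lambda$-periodicity finishes the proof'' cannot close the argument: the cell that must be forced and then repeated is the $5\times 5$ cell of $\mathfrak{L}$ (twenty-five points, six of them red), and the statement to iterate is precisely the paper's --- every translate of the initial red $\mathfrak{T}_6$ by multiples of $5$ in the lattice directions is red. As written, the proposal both omits the decisive forcing chain and aims the induction at the wrong period lattice.
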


\begin{figure}[H]
\begin{center}
\begin{tikzpicture}[line cap=round,line join=round,>=triangle 45,x=1.0cm,y=1.0cm,scale=0.6]
\clip(4.5,6.5) rectangle (23.5,19.5);

\foreach \a in {0,1,...,15}
\draw [dashed, opacity=0.5] (-10,0)++(60:\a)++(120:\a)--+(40,0);

\foreach \a in {0,1,...,15}
\draw [dashed, opacity=0.5] (-10,0)++(60:\a+1)++(120:\a)--+(40,0);

\foreach \a in {0,1,...,40}
\draw [dashed, opacity=0.5] (-10,0)++(\a,0)--+(60:30);

\foreach \a in {0,1,...,45}
\draw [dashed, opacity=0.5] (-10,0)++(\a,0)--+(120:30);

\foreach \b in {0,1,...,30}
\foreach \a in {0,1,...,30}
\draw [fill=blue] (-5,0)++(\a,0.)++(60:\b)++(120:\b) circle (2pt);

\foreach \b in {0,1,...,30}
\foreach \a in {0,1,...,30}
\draw [fill=blue] (-5,0)++(\a,0)++(60:1)++(60:\b)++(120:\b) circle (2pt);

\foreach \a in {-1,...,5}
\foreach \b in {0,...,10}
\draw [fill=red] (5*\a,0)++(60:5*\b) node[diamond, scale=0.7, fill=red] {};

\foreach \a in {-1,...,5}
\foreach \b in {0,...,10}
\draw [fill=red] (5*\a+1,0)++(60:1)++(60:5*\b) node[diamond, scale=0.7, fill=red] {};

\foreach \a in {-1,...,5}
\foreach \b in {0,...,10}
\draw [fill=red] (5*\a+2,0)++(60:-1)++(60:5*\b) node[diamond, scale=0.7, fill=red] {};

\foreach \a in {-1,...,5}
\foreach \b in {0,...,10}
\draw [fill=red] (5*\a+2,0)++(60:2)++(60:5*\b) node[diamond, scale=0.7, fill=red] {};

\foreach \a in {-1,...,5}
\foreach \b in {0,...,10}
\draw [fill=red] (5*\a+3,0)++(60:5*\b) node[diamond, scale=0.7, fill=red] {};

\foreach \a in {-1,...,5}
\foreach \b in {0,...,10}
\draw [fill=red] (5*\a+4,0)++(60:-2)++(60:5*\b) node[diamond, scale=0.7, fill=red] {};

\end{tikzpicture}
\end{center}
\caption{}\label{f:tcol}
\end{figure}

\begin{proof}
Suppose that $\mathbb{E}^2$ is coloured in red and blue so that there is no red $\ell_2$ and no blue $\ell_5$.
Suppose there exist three red points of $\mathfrak{L}$ that form a $\mathfrak{T}_3$. By Lemma~\ref{l:t3t6}, it may be assumed that there is a red $\mathfrak{T}_6$. Denote its points by $A$, $B$, $C$, $D$, $E$, $F$ (see Figure~\ref{f:tcol1}). It will be proved that the translate $A'B'C'D'E'F'$ of $ABCDEF$ by the vector of length $5$ collinear to $\overrightarrow{AD}$ is red.\\
Consider the points shown in Figure~\ref{f:tcol1}. Since $A$, $D$ and $F$ are red, by Lemma~\ref{l:redtr}, $I$ is blue. Since $C$, $F$ and $D$ are red, by Lemma~\ref{l:redtr}, $J$ is blue. Points $K$, $L$, $M$, $N$ are blue, since each one is distance $1$ apart from a red point. If $R$ is red, then both $P$ and $Q$ are blue and form a blue $\ell_5$ with $K$, $L$ and $I$. Therefore $R$ is blue. Then the point $A'$ is red (otherwise $A'JNMR$ is a red $\ell_5$).\\
Since $S_1$, $S_2$, $S_3$, $S_4$ are blue (as distance $1$ apart from red points $D$ and $A'$), $B'$ is red. Similarly, $F'$ is red. Points $V$ and $W$ are blue as they are distance $1$ apart from $C$. Points $U$ is blue by Lemma~\ref{l:redtr} (since $A$, $D$ and $B$ are red). If $X$ is red, then $X_1$ and $X_2$ are blue and a blue $\ell_5$ $UVWX_1X_2$ is formed. Therefore, $X$ is blue. Similarly, $Y$ is blue. By Lemma~\ref{l:t3t6}, $A'B'F'$ must be contained in a red $\mathfrak{T}_6$, and since $X$ and $Y$ are blue, the only possible such $\mathfrak{T}_6$ is $A'B'C'D'E'F'$. Hence, $A'$, $B'$, $C'$, $D'$, $E'$, $F'$ are blue.\\
Similarly, the translates of $ABCDEF$ by vectors of length $5$ collinear to $\overrightarrow{EB}$ and $\overrightarrow{CF}$ are red. By repeatedly applying the same argument to the new red translates, it can be seen that all the translates of $ABCDEF$ by a multiple of $5$ in 
$\mathfrak{L}$ are red. All the other points are blue, as each one is distance $1$ apart from a red point. Hence, the colouring as in Figure~\ref{f:tcol} is obtained.
\end{proof}

\begin{figure}[H]
\begin{center}
\begin{tikzpicture}[line cap=round,line join=round,>=triangle 45,x=1.0cm,y=1.0cm,scale=0.9]
\clip(-0.5,-5) rectangle (9,3);

\foreach \a in {1,2,...,10}
\draw [dashed, opacity=0.3] (-10,0)++(240:10)++(60:\a)++(120:\a)--+(32,0);

\foreach \a in {1,2,...,10}
\draw [dashed, opacity=0.3] (-10,0)++(240:10)++(60:\a+1)++(120:\a)--+(32,0);

\foreach \a in {1,2,...,30}
\draw [dashed, opacity=0.3] (-10,0)++(240:10)++(\a,0)--+(60:30);

\foreach \a in {1,2,...,40}
\draw [dashed, opacity=0.3] (-10,0)++(240:10)++(\a,0)--+(120:30);

\draw [fill=red] (0,0) node[diamond, scale=0.7, fill=red] {};
\draw [fill=blue] (0,0)++(0.3,-0.2) circle(0pt) node{$A$};

\draw [fill=red] (1,0)++(60:1) node[diamond, scale=0.7, fill=red] {};
\draw [fill=blue] (1,0)++(60:1)++(0.3,-0.2) circle(0pt) node{$B$};

\draw [fill=red] (2,0)++(60:-1) node[diamond, scale=0.7, fill=red] {};
\draw [fill=blue] (2,0)++(60:-1)++(0.3,-0.2) circle(0pt) node{$F$};

\draw [fill=red] (2,0)++(60:2) node[diamond, scale=0.7, fill=red] {};
\draw [fill=blue] (2,0)++(60:2)++(0.3,-0.2) circle(0pt) node{$C$};

\draw [fill=red] (3,0) node[diamond, scale=0.7, fill=red] {};
\draw [fill=blue] (3,0)++(0.3,-0.2) circle(0pt) node{$D$};

\draw [fill=red] (4,0)++(60:-2) node[diamond, scale=0.7, fill=red] {};
\draw [fill=blue] (4,0)++(60:-2)++(0.3,-0.2) circle(0pt) node{$E$};

\draw [fill=white] (5,0) circle(2pt);
\draw [fill=blue] (5,0)++(0.3,-0.2) circle(0pt) node{$A'$};

\draw [fill=white] (6,0)++(60:1) circle(2pt);
\draw [fill=blue] (6,0)++(60:1)++(0.3,-0.2) circle(0pt) node{$B'$};

\draw [fill=white] (7,0)++(60:-1) circle(2pt);
\draw [fill=blue] (7,0)++(60:-1)++(0.3,-0.2) circle(0pt) node{$F'$};

\draw [fill=white] (7,0)++(60:2) circle(2pt);
\draw [fill=blue] (7,0)++(60:2)++(0.3,-0.2) circle(0pt) node{$C'$};

\draw [fill=white] (8,0) circle(2pt);
\draw [fill=blue] (8,0)++(0.3,-0.2) circle(0pt) node{$D'$};

\draw [fill=white] (9,0)++(60:-2) circle(2pt);
\draw [fill=blue] (9,0)++(60:-2)++(0.3,-0.2) circle(0pt) node{$E'$};

\draw [fill=blue] (0,0)++(-60:1) circle(2pt);
\draw [fill=blue] (0,0)++(-60:1)++(0.3,-0.2) circle(0pt) node{$K$};

\draw [fill=blue] (0,0)++(-60:2) circle(2pt);
\draw [fill=blue] (0,0)++(-60:2)++(0.3,-0.2) circle(0pt) node{$L$};

\draw [fill=blue] (0,0)++(-60:3) circle(2pt);
\draw [fill=blue] (0,0)++(-60:3)++(0.3,-0.2) circle(0pt) node{$I$};

\draw [fill=white] (0,0)++(-60:4) circle(2pt);
\draw [fill=blue] (0,0)++(-60:4)++(0.3,-0.2) circle(0pt) node{$Q$};

\draw [fill=white] (0,0)++(-60:5) circle(2pt);
\draw [fill=blue] (0,0)++(-60:5)++(0.3,-0.2) circle(0pt) node{$P$};

\draw [fill=blue] (5,0)++(60:-1) circle(2pt);
\draw [fill=blue] (5,0)++(60:-1)++(0.3,-0.2) circle(0pt) node{$J$};

\draw [fill=blue] (5,0)++(60:-2) circle(2pt);
\draw [fill=blue] (5,0)++(60:-2)++(0.3,-0.2) circle(0pt) node{$N$};

\draw [fill=blue] (5,0)++(60:-3) circle(2pt);
\draw [fill=blue] (5,0)++(60:-3)++(0.3,-0.2) circle(0pt) node{$M$};

\draw [fill=white] (5,0)++(60:-4) circle(2pt);
\draw [fill=blue] (5,0)++(60:-4)++(0.3,-0.2) circle(0pt) node{$R$};

\draw [fill=blue] (0,0)++(60:3) circle(2pt);
\draw [fill=blue] (0,0)++(60:3)++(0.3,-0.2) circle(0pt) node{$U$};

\draw [fill=blue] (1,0)++(60:3) circle(2pt);
\draw [fill=blue] (1,0)++(60:3)++(0.3,-0.2) circle(0pt) node{$V$};

\draw [fill=blue] (2,0)++(60:3) circle(2pt);
\draw [fill=blue] (2,0)++(60:3)++(0.3,-0.2) circle(0pt) node{$W$};

\draw [fill=white] (3,0)++(60:3) circle(2pt);
\draw [fill=blue] (3,0)++(60:3)++(0.3,-0.2) circle(0pt) node{$X_1$};

\draw [fill=white] (4,0)++(60:3) circle(2pt);
\draw [fill=blue] (4,0)++(60:3)++(0.3,-0.2) circle(0pt) node{$X_2$};

\draw [fill=blue] (2,0)++(60:1) circle(2pt);
\draw [fill=blue] (2,0)++(60:1)++(0.3,-0.2) circle(0pt) node{$S_1$};

\draw [fill=blue] (3,0)++(60:1) circle(2pt);
\draw [fill=blue] (3,0)++(60:1)++(0.3,-0.2) circle(0pt) node{$S_2$};

\draw [fill=white] (4,0)++(60:1) circle(2pt);
\draw [fill=blue] (4,0)++(60:1)++(0.3,-0.2) circle(0pt) node{$S_3$};

\draw [fill=white] (5,0)++(60:1) circle(2pt);
\draw [fill=blue] (5,0)++(60:1)++(0.3,-0.2) circle(0pt) node{$S_4$};

\draw [fill=white] (4,0)++(60:2) circle(2pt);
\draw [fill=blue] (4,0)++(60:2)++(0.3,-0.2) circle(0pt) node{$X$};

\draw [fill=white] (6,0)++(60:-2) circle(2pt);
\draw [fill=blue] (6,0)++(60:-2)++(0.3,-0.2) circle(0pt) node{$Y$};

\end{tikzpicture}
\end{center}
\caption{}\label{f:tcol1}
\end{figure}

\begin{lemma}\label{l:col2}
Let $\mathbb{E}^2$ be coloured in red and blue so that there is no red $\ell_2$. Let $\mathfrak{L}$ be a unit triangular lattice that does not contain three red points forming a $\mathfrak{T}_3$. If there is no blue $\ell_5$, then the colouring of $\mathfrak{L}$ is unique (up to translation or rotation by a multiple of $60\degree$), and is depicted in Figure~\ref{f:lcol}.
\end{lemma}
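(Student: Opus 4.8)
The plan is to run the same kind of forced‑propagation argument as in Lemma~\ref{l:col1}, but anchored at a single red point rather than at a red $\mathfrak{T}_6$ (a $\mathfrak{T}_6$, like a $\mathfrak{T}_7$, contains a $\mathfrak{T}_3$, so neither is available here, and in particular Lemma~\ref{l:t7} will not be invoked). First I fix notation. An entirely blue line of $\mathfrak{L}$ would be a blue $\ell_5$, so $\mathfrak{L}$ contains a red point $P$; call the three families of lattice lines whose consecutive points are at distance $1$ the \emph{short} lines, and the three families whose consecutive points are at distance $\sqrt3$ the \emph{long} lines. From the absence of a blue $\ell_5$, every five consecutive points of a short line contain a red point, and from the absence of a red $\ell_2$ no two consecutive points of a short line are both red; hence on every short line the gaps between consecutive red points lie in $\{2,3,4,5\}$. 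It will be convenient to record two structural facts about the red points of $\mathfrak{L}$, each proved by the usual method of the paper (colour points one at a time; a point forced to be blue feeds a candidate blue $\ell_5$, a point forced to be red produces a red $\ell_2$, a red $\mathfrak{T}_3$, or --- after a small rotation, as in the proofs of Lemmas~\ref{l:redtr}, \ref{l:t7} and~\ref{l:t3t6} --- a blue equilateral triangle of side $3$ with red centre, excluded by Lemma~\ref{l:bluetr}): \emph{(i)} no two red points of $\mathfrak{L}$ are at distance $2$; and \emph{(ii)} each red point has exactly two red points at distance $\sqrt3$ from it, and these two are collinear with it. Fact \emph{(ii)} is exactly where $\mathfrak{T}_3$‑freeness does its work: among the six lattice points at distance $\sqrt3$ from $P$ (the vertices of a regular hexagon, consecutive vertices being at distance $\sqrt3$), two consecutive red ones would give, with $P$, a red $\mathfrak{T}_3$, and three alternating red ones would form an equilateral triangle of side $3$ with red centre $P$, which after a small rotation about $P$ contradicts Lemma~\ref{l:bluetr}; so the only surviving possibility with two reds is the diametrically opposite (collinear) one. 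Consequently the red points of $\mathfrak{L}$ lie on a union of pairwise parallel long lines, each entirely red --- the red set cannot ``bend'' at distance $\sqrt3$.

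The next step is to pin the spacing between consecutive red long lines. Let $\mathfrak m$ be the red long line through $P$ and let $\mathfrak s$ be a short line through $P$; walking along $\mathfrak s$ away from $P$, the first red point is reached after a gap $g\in\{2,3,4,5\}$ and lies on the next red long line. I claim $g=5$. Gaps $1$ and $2$ are impossible by $\ell_2$‑freeness and by fact \emph{(i)} (with $g=2$ giving a distance‑$2$ red pair), $g=3$ is eliminated by $\mathfrak{T}_3$‑freeness together with the usual blue‑$\ell_5$ tracing through a bounded patch (of the kind shown in Figures~\ref{f:t3t4}--\ref{f:t5t6}), and $g=4$ again by fact \emph{(i)}; on the other hand $g\ge 6$ is impossible because a long gap on one short line pushes the red points of a parallel short line two lattice lines over too far apart, producing a blue $\ell_5$ there. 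Hence the red long line adjacent to $\mathfrak m$ is exactly five short‑steps away, on each side. Repeating the whole argument with $P$ replaced by a point of a newly found red long line shows that, for one of the three long‑line directions, exactly one line in every five consecutive lines of that family is entirely red, while every other lattice point, being at distance $1$ from a red point, must be blue. This is precisely the colouring of Figure~\ref{f:lcol}; the three admissible choices of long‑line direction are interchanged by rotation through a multiple of $60\degree$ (and the reflection of the pattern coincides with one of these rotations), and translating $\mathfrak L$ absorbs the remaining freedom, which accounts for the stated non‑uniqueness.

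I expect the genuine work --- and the main obstacle --- to be the bounded‑patch case analysis hidden in facts \emph{(i)}, \emph{(ii)} and in the claim $g=5$. Unlike in Lemma~\ref{l:col1}, there is no single rigid configuration to start from, so the ``forced‑blue $\Rightarrow$ blue $\ell_5$ / forced‑red $\Rightarrow$ red $\ell_2$, red $\mathfrak{T}_3$, or Lemma~\ref{l:bluetr} triangle'' dichotomy must be pushed through a moderately large neighbourhood of $P$ and in more than one short direction, and one then has to check that the striped pattern is the \emph{only} local red configuration surviving every branch. The delicate point is to invoke $\mathfrak{T}_3$‑freeness at precisely the spots where it is needed --- to kill the ``rhombus'' and ``bent chain'' red configurations that are otherwise compatible with the $\ell_2$‑ and $\ell_5$‑constraints --- without assuming more than the hypothesis provides; as in the earlier lemmas, a carefully labelled figure of the relevant portion of the unit triangular lattice should make this bookkeeping manageable.
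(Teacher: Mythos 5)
Your overall plan (first show the red points of $\mathfrak{L}$ form full, pairwise parallel ``long'' lines, then pin the spacing at one line in five) is a viable reorganisation of the paper's argument, which instead propagates directly from one explicit patch: a red $A$ has a red $B$ at distance $\sqrt{3}$ because an all-blue alternating triple at distance $\sqrt{3}$ from $A$ would be a blue side-$3$ triangle with red centre (Lemma~\ref{l:bluetr}); $\mathfrak{T}_3$-freeness and unit distances then force four blues on the short line between $A$ and $B$, so the fifth point $B'$ is red, then $C$ and $A'$ are red, and iterating yields the striped colouring of Figure~\ref{f:lcol} in one sweep. Your facts (i) and (ii) are true and provable by exactly such bounded patches, and your spacing step can be completed (minimum separation $2.5$ between red long lines from the exclusions of separations $0.5,1,1.5,2$, plus the no-blue-$\ell_5$ bound along a short line). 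So the route works; but as written the load-bearing steps are either deferred or justified incorrectly, and they are precisely where you yourself locate ``the genuine work.''

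Concretely: (1) your argument for fact (ii) only excludes an adjacent red pair and a full alternating red \emph{triple} among the six points at distance $\sqrt{3}$ from $P$; that does not yield ``exactly two, diametrically opposite.'' You never establish existence --- you need that each alternating triple contains at least one \emph{red} point, because an all-\emph{blue} triple is a blue side-$3$ triangle with red centre $P$, contradicting Lemma~\ref{l:bluetr}; this is the paper's opening move and is indispensable, after which adjacency forces the two reds from the two triples to be opposite and kills any further reds. (2) The elimination of the gap $g=4$ ``again by fact (i)'' fails for the short direction perpendicular to the red long lines: two reds $4$ apart are not $2$ apart, and fact (i) says nothing about them. That case needs its own patch, e.g.\ with full red lines at perpendicular distance $4$ the five points of the intermediate parallel short line between them are forced blue (two by unit distance to each line, the middle two by $\mathfrak{T}_3$-freeness), giving a blue $\ell_5$; the same applies to your $g=3$ case in that direction. (3) For the same perpendicular direction, the claim that the first red point along the short line lies on the \emph{adjacent} red long line is false in the final colouring (that line misses the adjacent red line entirely and first meets red five steps away, on the line after it); the argument should be run along one of the two oblique short directions. (4) Pairwise parallelism of the red long lines is asserted without proof; it does follow (two non-parallel full red long lines either meet in a lattice point, contradicting ``exactly two'' in fact (ii), or contain red points at distance $1$ near their crossing), but it must be said. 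None of these defects is unfixable, yet together they mean the proposal is a plan rather than a proof of the lemma.
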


\begin{figure}[H]
\begin{center}
\begin{tikzpicture}[line cap=round,line join=round,>=triangle 45,x=1.0cm,y=1.0cm,scale=0.7]
\clip(-2.9,0.3) rectangle (12.7,6.5);

\foreach \a in {0,1,...,10}
\draw [dashed, opacity=0.5] (-10,0)++(60:\a)++(120:\a)--+(32,0);

\foreach \a in {0,1,...,10}
\draw [dashed, opacity=0.5] (-10,0)++(60:\a+1)++(120:\a)--+(32,0);

\foreach \a in {0,1,...,30}
\draw [dashed, opacity=0.5] (-10,0)++(\a,0)--+(60:10);

\foreach \a in {0,1,...,30}
\draw [dashed, opacity=0.5] (-10,0)++(\a,0)--+(120:10);

\foreach \b in {0,1,...,20}
\foreach \a in {0,1,...,20}
\draw [fill=blue] (-5,0)++(\a,0.)++(60:\b)++(120:\b) circle (2pt);

\foreach \b in {0,1,...,20}
\foreach \a in {0,1,...,20}
\draw [fill=blue] (-5,0)++(\a,0)++(60:1)++(60:\b)++(120:\b) circle (2pt);

\foreach \a in {0,...,5}
\draw [fill=red] (0,0)++(60:\a)++(120:\a) node[diamond, scale=0.7, fill=red] {};

\foreach \a in {0,...,4}
\draw [fill=red] (2,0)++(60:\a+1)++(120:\a) node[diamond, scale=0.7, fill=red] {};

\foreach \a in {0,...,5}
\draw [fill=red] (5,0)++(60:\a)++(120:\a) node[diamond, scale=0.7, fill=red] {};

\foreach \a in {0,...,4}
\draw [fill=red] (7,0)++(60:\a+1)++(120:\a) node[diamond, scale=0.7, fill=red] {};

\foreach \a in {0,...,4}
\draw [fill=red] (12,0)++(60:\a+1)++(120:\a) node[diamond, scale=0.7, fill=red] {};

\foreach \a in {0,...,4}
\draw [fill=red] (-3,0)++(60:\a+1)++(120:\a) node[diamond, scale=0.7, fill=red] {};

\foreach \a in {0,...,5}
\draw [fill=red] (10,0)++(60:\a)++(120:\a) node[diamond, scale=0.7, fill=red] {};

\end{tikzpicture}
\end{center}
\caption{}\label{f:lcol}
\end{figure}

\begin{proof}
Suppose that $\mathbb{E}^2$ is coloured in red and blue so that there is no red $\ell_2$ and no blue $\ell_5$.
If $\mathfrak{L}$ does not contain a red point, then any $\ell_5$ is blue, therefore $\mathfrak{L}$ contains a red point $A$. By Lemma~\ref{l:bluetr}, one of the points of $\mathfrak{L}$ at distance $\sqrt{3}$ to $A$ is red (otherwise the three such points form a blue triangle with side length $3$ and red centre $A$). Denote this point by $B$ (Figure~\ref{f:lcol1}). Since 
$\mathfrak{L}$ does not contain a red $\mathfrak{T}_3$, the points $D$ and $G$ are blue. Points $E$, $F$, $I$, $H$, $K$, $J$ are blue, since they are distance $1$ apart from $B$. Then the point $B'$ is red (otherwise blue $\ell_5$ $DEFGB'$ is formed). Point $N$ is $1$ apart from $B'$, hence blue. Then $C$ and $A'$ are red (otherwise a blue $\ell_5$ is formed).\\
By repeating the same argument for points $B$ and $C$, $B$ and $A$ (instead of $A$ and $B$), and so on, it can be shown that any node of $\mathfrak{L}$ on the line $AB$ is red. Similarly, since $A'$ and $B'$ are both red, any node of $\mathfrak{L}$ on the line $A'B'$ is red. By the same argument, $A''$, $B''$ and any node on the line containing them is red; $A'''$, $B'''$ and any node on the line containing them is red, and so on. By colouring all point distance $1$ apart form red points blue, the colouring in Figure~\ref{f:lcol} is obtained.
\end{proof}

\begin{figure}[H]
\begin{center}
\begin{tikzpicture}[line cap=round,line join=round,>=triangle 45,x=1.0cm,y=1.0cm,scale=1]
\clip(-5.5,-1.5) rectangle (3,4);

\foreach \a in {1,2,...,10}
\draw [dashed, opacity=0.3] (-10,0)++(240:10)++(60:\a)++(120:\a)--+(32,0);

\foreach \a in {1,2,...,10}
\draw [dashed, opacity=0.3] (-10,0)++(240:10)++(60:\a+1)++(120:\a)--+(32,0);

\foreach \a in {1,2,...,30}
\draw [dashed, opacity=0.3] (-10,0)++(240:10)++(\a,0)--+(60:30);

\foreach \a in {1,2,...,30}
\draw [dashed, opacity=0.3] (-10,0)++(240:10)++(\a,0)--+(120:30);

\draw [fill=red] (0,0) node[diamond, scale=0.7, fill=red] {};
\draw [fill=blue] (0,0)++(0.3,-0.2) circle(0pt) node{$A$};

\draw [fill=red] (0,0)++(60:1)++(120:1) node[diamond, scale=0.7, fill=red] {};
\draw [fill=blue] (0,0)++(60:1)++(120:1)++(0.3,-0.2) circle(0pt) node{$B$};

\draw [fill=white] (-5,0) circle(2pt);
\draw [fill=blue] (-5,0)++(0.3,-0.2) circle(0pt) node{$A'''$};

\draw [fill=white] (-5,0)++(60:1)++(120:1) circle(2pt);
\draw [fill=blue] (-5,0)++(60:1)++(120:1)++(0.3,-0.2) circle(0pt) node{$B'''$};

\draw [fill=white] (2,0)++(120:4) circle(2pt);
\draw [fill=blue] (2,0)++(120:4)++(0.3,-0.2) circle(0pt) node{$C$};

\draw [fill=blue] (2,0)++(120:3) circle(2pt);
\draw [fill=blue] (2,0)++(120:3)++(0.3,-0.2) circle(0pt) node{$H$};

\draw [fill=blue] (2,0)++(120:2) circle(2pt);
\draw [fill=blue] (2,0)++(120:2)++(0.3,-0.2) circle(0pt) node{$I$};

\draw [fill=blue] (2,0)++(120:1) circle(2pt);
\draw [fill=blue] (2,0)++(120:1)++(0.3,-0.2) circle(0pt) node{$G$};

\draw [fill=white] (2,0)++(120:0) circle(2pt);
\draw [fill=blue] (2,0)++(120:0)++(0.3,-0.2) circle(0pt) node{$N$};

\draw [fill=white] (2,0)++(120:-1) circle(2pt);
\draw [fill=blue] (2,0)++(120:-1)++(0.3,-0.2) circle(0pt) node{$A'$};

\draw [fill=blue] (1,0)++(120:1) circle(2pt);
\draw [fill=blue] (1,0)++(120:1)++(0.3,-0.2) circle(0pt) node{$F$};

\draw [fill=blue] (0,0)++(120:1) circle(2pt);
\draw [fill=blue] (0,0)++(120:1)++(0.3,-0.2) circle(0pt) node{$E$};

\draw [fill=blue] (-1,0)++(120:1) circle(2pt);
\draw [fill=blue] (-1,0)++(120:1)++(0.3,-0.2) circle(0pt) node{$D$};

\draw [fill=white] (-2,0)++(120:1) circle(2pt);
\draw [fill=blue] (-2,0)++(120:1)++(0.3,-0.2) circle(0pt) node{$B''$};

\draw [fill=white] (3,0)++(120:1) circle(2pt);
\draw [fill=blue] (3,0)++(120:1)++(0.3,-0.2) circle(0pt) node{$B'$};

\draw [fill=blue] (-2,0)++(60:2) circle(2pt);
\draw [fill=blue] (-2,0)++(60:2)++(0.3,-0.2) circle(0pt) node{$J$};

\draw [fill=blue] (-2,0)++(60:3) circle(2pt);
\draw [fill=blue] (-2,0)++(60:3)++(0.3,-0.2) circle(0pt) node{$K$};

\draw [fill=white] (-2,0) circle(2pt);

\draw [fill=white] (-2,0)++(60:-1) circle(2pt);
\draw [fill=blue] (-2,0)++(60:-1)++(0.3,-0.2) circle(0pt) node{$A''$};

\end{tikzpicture}
\end{center}
\caption{}\label{f:lcol1}
\end{figure}

\begin{proof}[Proof of Theorem~\ref{main}]
Let the Euclidean space $\mathbb{E}^2$ be coloured in red and blue so that there are no two red points distance $1$ apart. Suppose that there are no five blue points that form an $\ell_5$. Then there is a red point $A$. Consider two points $B$ and $C$, both distance $5$ apart from $A$, such that $|BC|=1$. At least one of the points $B$ and $C$ (say, $B$) is blue. Consider the unit triangular lattice $\mathfrak{L}$ that contains $A$ and $B$. By Lemma~\ref{l:col1} and Lemma~\ref{l:col2}, $\mathfrak{L}$ is coloured either as in Figure~\ref{f:tcol} or as in Figure~\ref{f:lcol}. But neither one of the colourings contains two points of different colour distance $5$ apart, which gives a contradiction. Therefore, there exist five blue points that form an $\ell_5$.
\end{proof}

\section{Acknowledgements}
The author would like to thank Ron Graham and Roz\'alia Juh\'asz for providing information about the current state of the problem, and David Gunderson for valuable comments.

\begin{bibdiv}
\begin{biblist}[\normalsize]

\bib{at}{article}{
    author={Arman, Andrii},
    author={Tsaturian, Sergei},
    title={A result in asymmetric Euclidean Ramsey theory},
    date={2017},
    note={https://arxiv.org/pdf/1702.04799.pdf, accessed 27 March 2017},
}

\bib{erdos1}{article}{
   author={Erd\H os, P.},
   author={Graham, R. L.},
   author={Montgomery, Paul},
   author={Rothschild, B. L.},
   author={Spencer, J.},
   author={Straus, E. G.},
   title={\emph{Euclidean Ramsey theorems. I}},
   journal={\emph{J. Combin. Theory Ser. A}},
   volume={14},
   date={1973},
   pages={341--363},
}

\bib{erdos2}{article}{
   author={Erd\H os, Paul},
   author={Graham, R. L.},
   author={Montgomery, P.},
   author={Rothschild, B. L.},
   author={Spencer, J.},
   author={Straus, E. G.},
   title={\emph{Euclidean Ramsey theorems. II}},
   conference={
      title={\emph{Infinite and finite sets (Colloq., Keszthely, 1973; dedicated
      to P. Erd\H os on his 60th birthday), Vol. I}},
   },
   book={
      publisher={North-Holland, Amsterdam},
   },
   date={1975},
   pages={529--557. Colloq. Math. Soc. J\'anos Bolyai, Vol. 10},
}

\bib{erdos3}{article}{
   author={Erd\H os, P.},
   author={Graham, R. L.},
   author={Montgomery, P.},
   author={Rothschild, B. L.},
   author={Spencer, J.},
   author={Straus, E. G.},
   title={\emph{Euclidean Ramsey theorems. III}},
   conference={
      title={\emph{Infinite and finite sets (Colloq., Keszthely, 1973; dedicated
      to P. Erd\H os on his 60th birthday), Vol. I}},
   },
   book={
      publisher={North-Holland, Amsterdam},
   },
   date={1975},
   pages={559--583. Colloq. Math. Soc. J\'anos Bolyai, Vol. 10},
}

\bib{graham}{book}{
   title={\emph{Euclidean Ramsey theory, in} Handbook of discrete and computational geometry \emph{(J. E. Goodman and J. O'Rourke, Eds.)}},
   edition={2},
   author={R. L. Graham},
   editor={},
   publisher={Chapman \& Hall/CRC, Boca Raton, FL},
   date={2004},
}

\bib{ivan}{thesis}{
   author={Iv\'an, L\'aszl\'o},
   title={Monochromatic point sets in the plane and in the space},
   date={1979},
   note={Master’s Thesis, University of Szeged, Bolyai Institute (in Hungarian)}
}

\bib{juhasz}{article}{
   author={Juh\'asz, Roz\'alia},
   title={\emph{Ramsey type theorems in the plane}},
   journal={\emph{J. Combin. Theory Ser. A}},
   volume={27},
   date={1979},
   pages={152--160},
}

\end{biblist}
\end{bibdiv}

\end{document}